\definecolor{webgreen}{rgb}{0,.5,0}
\definecolor{webbrown}{rgb}{.6,0,0}
\theoremstyle{plain}
\newtheorem{theorem}{Theorem}
\newtheorem{corollary}[theorem]{Corollary}
\newtheorem{lemma}[theorem]{Lemma}
\theoremstyle{definition}
\newtheorem{definition}[theorem]{Definition}
\newtheorem{example}[theorem]{Example}
\newtheorem{conjecture}[theorem]{Conjecture}
\theoremstyle{remark}
\newcommand{\bbN}{\mathbb{N}}
\newcommand{\bbZ}{\mathbb{Z}}
\newcommand{\adics}{\bbZ_{2}}
\newcommand{\Fq}{F_2[[q]]}
\newcommand{\Set}[1]{\left\{#1\right\}}
\newcommand{\Tab}{T_{A,B}}
\newcommand{\Tqq}{T_{1,1+q^2}}
\newcommand{\xcong}[1]{\underset{#1}{\equiv}}
\newcommand{\xncong}[1]{\underset{#1}{\not\equiv}}
\newcommand{\xiff}{\Leftrightarrow}
\newcommand{\ximplies}{\Rightarrow}
\newcommand{\xlimplies}{\Leftarrow}
\newcommand{\wPV}{\widehat{\PV}}
\DeclareMathOperator{\PV}{PV}
\begin{document}

\vspace*{2em}
\begin{center}
{\LARGE\bf 
On $q$-Analogs of the $3x+1$ Dynamical System}
\vskip 1cm
\large
Kenneth G.\ Monks \\
Department of Mathematics \\
University of Scranton \\
Scranton, PA 18510 \\
USA\\
\href{mailto:monks@scranton.edu}{\tt monks@scranton.edu} \\
\end{center}

\vskip .2 in
\begin{abstract}
We construct an infinite family of maps on $F_2[[q]]$ that are conjugate to the famous $3x+1$ map on the $2$-adic integers.  We identify one such map that has the further property that the positive integers whose $3x+1$-orbit contains $1$ correspond to polynomials via a conjugacy, and whose polynomial orbits all eventually enter the $2$-cycle or one of the two fixed points. Proving that every positive integer corresponds to a polynomial via this conjugacy would settle the original $3x+1$ conjecture.
\end{abstract}

\section{Introduction and background}

A \emph{discrete dynamical system} on a set $X$ is obtained by iterating a map $f\colon X\rightarrow X$. For any function $f\colon X\rightarrow X$ and any $x\in X$, the \emph{$f$-orbit} of $x$ is the set $\left\{f^{k}(x):k\in\mathbb{N}\right\}$ where $f^{0}$ is the identity map and $f^{k}=f\circ f^{k-1}$ for all positive integers $k$. Two dynamical systems $f\colon X\to X$ and $g\colon Y\to Y$ are \emph{conjugate} (or \emph{isomorphic} as dynamical systems) if and only if there exists a bijection $h\colon X\to Y$ such that $h\circ f=g\circ h$. Such a map $h$ is called a \emph{conjugacy}, and it is easy to see that conjugacies preserve the sizes of orbits as well as other properties of the dynamical system.

The famous $3x+1$ conjecture states that the $T$-orbit of every positive integer contains $1$, where 
    $$
      T(x)  =
        \begin{cases}
           \frac{x}{2}, & \text{if $2$ divides $x$;} \\[0.3em]
           \frac{3x+1}{2}, & \text{otherwise.}
        \end{cases}
    $$
It has been an open problem in mathematics since the 1930s (see Lagarias \cite{Lagarias} and  Wirsching \cite{Wirsching}).

More recently, several authors have studied a similar dynamical system, $T_q\colon F_2[q]\to F_2[q]$ where $F_2[q]$ is the ring of univariate polynomials in $q$ with coefficients in the field with two elements, and for all $x\in F_2[q]$,
  $$
    T_q(x)  =
      \begin{cases}
         \frac{x}{q}, & \text{if $q$ divides $x$;}\\[0.5em]
         \frac{(1+q)x+1}{q}, & \text{otherwise.}
      \end{cases}
  $$
There is a natural bijection $\xi\colon F_2[q]\to\bbZ^+$ that associates a $q$-polynomial $f(q)$ with the positive integer $f(2)$.  Its inverse map sends a positive integer $n$ to the $q$-polynomial obtained by replacing $2$ with $q$ in the base two representation of $n$. Note that the maps $T$ and $T_q$ have the same form if we replace $q$ with $2$ in the definition of $T_q$.

It was shown concurrently by Hicks, Mullen, Yucas, and Zavislak \cite{Hicks} and by Snapp and Tracy \cite{Snapp} that the $T_q$-orbit of any polynomial contains $1$. The latter concluded that the analog of the $3x+1$ conjecture holds for $T_q$ \cite[Theorem 2.1]{Snapp}.  While the result is analogous if we identify $q$-polynomials $p$ with their positive integer $\xi(p)$, this does not prove the $3x+1$ conjecture itself, since iterating $T_q$ on any polynomial eventually reaches the fixed point $1$, but iterating $T$ on infinitely many positive integers eventually reaches the two-cycle $\{1,2\}$. Thus, these two dynamical systems are not conjugate via any correspondence between the positive integers and polynomials.

In this paper, we show that the dynamical systems $T$ and $T_q$ actually are conjugate if we extend their domains to the ring of $2$-adic integers, $\adics$, and the ring of formal power series in $q$, $\Fq$, respectively.  Both dynamical systems have a unique two-cycle. Thus, to prove the $3x+1$ conjecture, it suffices to show for some conjugacy $h$ between $T$ and $T_q$ that for any positive integer $n$, iterating $T_q$ on $h(n)$ eventually reaches the unique two-cycle for $T_q$. Using $h$ instead of $\xi$ produces an analogous conjecture whose resolution would settle the original problem.

A similar situation occurs for any function that is conjugate to $T$. In particular, we will also show that if $A, B$ are elements of $\Fq$ with constant term $1$ (i.e., not divisible by $q$) then the map $\Tab\colon\Fq\to\Fq$ given by 
  $$\Tab(x)=\begin{cases}
                 \frac{x}{q}, & \text{if $q$ divides $x$;}\\[0.3em]
                 \frac{Ax+B}{q}, & \text{ otherwise}
               \end{cases}
  $$
is conjugate to $T$ and the $3x+1$ conjecture is true if and only if for any conjugacy $h\colon \adics\to\Fq$ between $T$ and $\Tab$, and any positive integer $n$, the $\Tab$-orbit of $h(n)$ eventually reaches the unique two-cycle for $\Tab$. Note that $T_q$ is $T_{1+q,1}$ in this notation.

Each element of $\adics$ is a formal sum $\sum_{i=0}^{\infty} x_i 2^i$ where $x_i\in\{0,1\}$ for all $i\in\bbN$. Thus, we can extend $\xi$ to a bijective correspondence between $\Fq$ and $\adics$ by associating a formal power series $f(q)\in\Fq$ to the $2$-adic integer $f(2)$. We will refer to the elements of $\Fq$ as \emph{$q$-series} in what follows for brevity.

The \emph{binary representation} of either a $2$-adic integer $x=\sum_{i=0}^{\infty} x_i 2^i$ or a $q$-series $x=\sum_{i=0}^{\infty} x_i q^i$ is the infinite sequence of zeroes and ones $x_0 x_1 x_2 \cdots$. By identifying $2$-adic integers and $q$-series with their binary representations, we can consider both $T$ and $T_q$ to be dynamical systems from $\Sigma_2\to\Sigma_2$, where $\Sigma_2$ is the set of all infinite binary sequences. We will say that a $2$-adic integer or $q$-series is \emph{even} if and only if the leading digit of its binary representation is $0$ (which implies it is divisible by $2$ or $q$, respectively), and \emph{odd} otherwise. Similarly, the \emph{parity} of an element is either $0$ or $1$ depending on whether the element is even or odd, respectively. 

The ordinary rational numbers with odd denominators (when written as reduced fractions) are isomorphic to a subring of $\adics$ and correspond to the $2$-adic integers with eventually repeating binary representations via the geometric series formula. We can use this as a compact way to represent a $q$-series whose binary representation is eventually repeating (keeping in mind that ordinary addition and multiplication of rational numbers correspond to addition and multiplication in $\adics$, not in $\Fq$). Also note that we can express such power series as rational expressions in $q$ with coefficients in $F_2$.

\begin{example}
   Let $P_1=1+q+q^3+q^4+q^6+q^7+\cdots=\frac{1+q}{1+q^3}\in\Fq$.  The binary expansion of $P_1$ is $110110110\cdots$ which we can abbreviate as $\overline{110}$. But $\overline{110}$ is also the binary representation of the $2$-adic $1+2+2^3+2^4+2^6+2^7+\cdots$ which equals $-3/7$ by the geometric series formula. Thus, we have the correspondence $\xi(P_1)=-3/7$.  Notice that $\xi(f)$ is a positive integer whenever $f\in\Fq$ is a polynomial, which is the correspondence assumed by Snapp and Tracy \cite[Theorem 2.1]{Snapp} when referring to the analogous conjecture.  
\end{example}

Notice that in this situation $f$ is odd if and only if the numerators of both $f$ and $\xi(f)$ are odd when each is written as a reduced fraction.  So the parity of $f$ is always the same as the parity of the numerators of the reduced fractions for $f$ and $\xi(f)$.

\section{Main results}\label{sec:main}

While the correspondence $\xi$ is a natural one to consider when comparing the dynamical systems $T$ and $T_q$, it is not a conjugacy even when their domains are extended to $\adics$ and $\Fq$, respectively.  That such conjugacies do exist, however, is the point of our first main result. For the remainder of this paper, we assume that $T\colon\adics\to\adics$ and $\Tab\colon\Fq\to\Fq$.

\begin{theorem}\label{thm:main}
   For any odd $A, B\in\Fq$, the map $\Tab$ is conjugate to $T$. 
\end{theorem}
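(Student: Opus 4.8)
The plan is to exhibit both $T$ and $\Tab$ as conjugate to a single common system, the shift map $\sigma\colon\Sigma_2\to\Sigma_2$, $\sigma(x_0x_1x_2\cdots)=x_1x_2\cdots$. Since ``being conjugate'' is an equivalence relation on self-maps (reflexivity, symmetry via $h^{-1}$, and transitivity via composition of the intertwining bijections are all immediate), composing a conjugacy $\wPV\colon\adics\to\Sigma_2$ of $T$ with $\sigma$ and the inverse of a conjugacy $\PV\colon\Fq\to\Sigma_2$ of $\Tab$ with $\sigma$ yields a bijection $h=\PV^{-1}\circ\wPV\colon\adics\to\Fq$ with $h\circ T=\Tab\circ h$, as desired. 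The conjugacies with $\sigma$ are the ``parity vector'' maps: $\PV(y)$ is the binary sequence whose $i$-th entry is the parity of $\Tab^i(y)$, and $\wPV$ is defined the same way using $T$. These intertwine the maps with $\sigma$ directly from the definition, since the $i$-th entry of $\PV(\Tab(y))$ is the parity of $\Tab^{i+1}(y)$, which is the $i$-th entry of $\sigma(\PV(y))$. So everything reduces to showing $\PV$ and $\wPV$ are bijections.

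The substance is a finite-approximation lemma proved by induction on $n$: the map $\Phi_n\colon\Fq\to\{0,1\}^n$ recording the first $n$ parities of the $\Tab$-orbit of $y$ factors through $\Fq/q^n\Fq$ and induces a \emph{bijection} onto $\{0,1\}^n$. The case $n=1$ is the statement that $\Phi_1(y)$ is the constant term of $y$. For the inductive step the key input is the description of the two inverse branches of $\Tab$: if $x$ has parity $p$, then $\Tab(x)=y$ forces $x=qy$ when $p=0$ and $x=A^{-1}(qy+B)$ when $p=1$, where $A^{-1}\in\Fq$ since $A$ is odd. In both cases $x$ is an affine function of $y$ whose multiplier is $q$ times a unit, so for $x,x'$ of equal parity one has $x\equiv x'\pmod{q^{n+1}}$ if and only if $\Tab(x)\equiv\Tab(x')\pmod{q^{n}}$. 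Writing $\Phi_{n+1}(y)=\bigl(p_0(y),\,\Phi_n(\Tab(y))\bigr)$ and applying the inductive hypothesis to $\Tab(y)$ and $\Tab(y')$ then shows $\Phi_{n+1}$ is constant on and separates the cosets of $q^{n+1}\Fq$; since $|\Fq/q^{n+1}\Fq|=2^{n+1}=|\{0,1\}^{n+1}|$, injectivity upgrades to bijectivity. Passing to the limit — equivalently, using that $\Fq$ is compact and complete in the $q$-adic topology while the $\Phi_n$ are compatible and surjective — shows $\PV\colon\Fq\to\Sigma_2$ is a bijection. The identical argument with $(q,A,B)$ replaced by $(2,3,1)$, using only that $3$ is a unit in $\adics$ and that $T$ is parity-determined with the analogous inverse branches, shows $\wPV$ is a bijection (this is the classical fact that $T$ on $\adics$ is conjugate to the shift; see Lagarias \cite{Lagarias}). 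Composing as above then produces the conjugacy $h$.

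I expect the induction in the second paragraph to be the only real content: everything about $h$ — its bijectivity and the intertwining identity — is then formal. It is precisely there that the hypotheses enter: oddness of $A$ gives $A^{-1}\in\Fq$, needed both to solve the odd branch for $x$ and to guarantee its multiplier is $q$ times a unit; oddness of $B$ is what makes $\Tab$ well defined and parity-determined in the first place, so that the odd preimage of a $q$-series is genuinely odd. A secondary bookkeeping point is the passage from the finite maps $\Phi_n$ to the infinite map $\PV$, which is routine once one observes that the $\Phi_n$ assemble into an inverse system matching $\Fq=\varprojlim\Fq/q^n\Fq$ with $\Sigma_2=\varprojlim\{0,1\}^n$.
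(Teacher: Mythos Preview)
Your proof is correct and follows essentially the same approach as the paper: both show $\Tab$ is conjugate to the shift map via the parity-vector map $\PV$, establish bijectivity of $\PV$ by proving it induces bijections $\Fq/q^n\Fq\to\{0,1\}^n$ via induction on $n$, pass to the inverse limit, and then invoke the classical fact that $T$ on $\adics$ is conjugate to the shift. The only cosmetic difference is in the induction step, where you argue via the inverse branches of $\Tab$ (using that each is affine with multiplier $q$ times a unit, since $A$ is invertible) while the paper tracks forward iterates via the identity $\Tab(U+q^{n+1}V)=\Tab(U)+q^nW$ with $W$ odd; these are equivalent formulations of the same congruence-preservation fact.
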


\begin{corollary}\label{cor:main1}
   The map $T_q$ is conjugate to $T$.
\end{corollary}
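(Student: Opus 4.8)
The plan is to obtain the corollary as an immediate specialization of Theorem~\ref{thm:main}. The key observation, already recorded in the introduction, is that $T_q$ coincides with $\Tab$ for the particular choice $A = 1+q$ and $B = 1$; that is, $T_q = T_{1+q,1}$. Comparing the two definitions confirms this: when $q$ divides $x$, both maps send $x$ to $x/q$, and when $q$ does not divide $x$, the map $T_q$ sends $x$ to $\bigl((1+q)x + 1\bigr)/q$, which is exactly $\Tab(x) = (Ax + B)/q$ with $A = 1+q$ and $B = 1$.

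To invoke Theorem~\ref{thm:main}, I would first verify that this choice of coefficients satisfies its hypothesis, namely that $A$ and $B$ are odd elements of $\Fq$. Recall that an element of $\Fq$ is odd precisely when its leading binary digit (constant term) is $1$, equivalently when it is not divisible by $q$. For $A = 1+q$ the constant term is $1$, and for $B = 1$ the constant term is likewise $1$, so both are odd. Hence the hypotheses of Theorem~\ref{thm:main} are met for this pair.

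Consequently, Theorem~\ref{thm:main} applies verbatim with $A = 1+q$ and $B = 1$, producing a conjugacy between $T_{1+q,1} = T_q$ and $T$, which is precisely the assertion of the corollary. Since all of the substantive work is carried by Theorem~\ref{thm:main}, I anticipate no genuine obstacle here; the only thing to confirm before citing the theorem is the elementary verification that $1+q$ and $1$ each have constant term $1$, so that the oddness hypothesis is satisfied.
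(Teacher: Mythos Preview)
Your proposal is correct and matches the paper's own argument exactly: the paper simply notes that $T_q = T_{1+q,1}$ and invokes Theorem~\ref{thm:main}, which is precisely what you do (with the extra, harmless verification that $1+q$ and $1$ are odd).
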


Thus, for any conjugacy $h$ between $T$ and $\Tab$, the power series that correspond to the positive natural numbers are those in $h(\bbN^+)$.  For $T_q$, this set does not include any polynomials, as all nonzero polynomial orbits end in the fixed point $1$ of $T_q$, but no positive integer has a $T$-orbit that contains the fixed points $0$ or $-1$ of $T$.

Similarly, any conjugacy between $T$ and $T_q$ must map $1$ to an element of the unique $2$-cycle of $T_q$, namely $\{P_1,qP_1\}$ (or equivalently $\{-3/7, -6/7\}$). If the conjugacy preserves parity, then for $T_q$ it is $-3/7$ that corresponds to $1$ for $T$, and so the analogous conjecture can be stated as follows:

\begin{conjecture}\label{cor:main2}
  For any conjugacy $h\colon\adics\to\Fq$ between $T$ and $T_q$ and any positive integer $n$, the $T_q$-orbit of $h(n)$ contains $-3/7$.
\end{conjecture}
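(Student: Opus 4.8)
The plan is to reduce this statement to the $3x+1$ Conjecture itself by exploiting the defining property of a conjugacy, and then to be candid about where the real difficulty lies. First I would fix an arbitrary conjugacy $h\colon\adics\to\Fq$ between $T$ and $T_q$; such maps exist by Corollary~\ref{cor:main1}. Since $h$ is a bijection with $h\circ T=T_q\circ h$, a routine induction yields $T_q^{k}\circ h=h\circ T^{k}$ for all $k\in\bbN$, so the $T_q$-orbit of $h(n)$ is precisely the $h$-image of the $T$-orbit of $n$. Hence $-3/7$ lies in the $T_q$-orbit of $h(n)$ if and only if $h^{-1}(-3/7)$ lies in the $T$-orbit of $n$.

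Next I would pin down $h^{-1}(-3/7)$. The element $-3/7=P_1$ belongs to the unique $2$--cycle $\Set{P_1,qP_1}$ of $T_q$. The identity $h\circ T^{2}=T_q^{2}\circ h$ shows that $h$ carries points of exact period $2$ for $T$ bijectively onto points of exact period $2$ for $T_q$; since each system has a unique $2$--cycle, $h$ maps $\Set{1,2}$ onto $\Set{P_1,qP_1}$, whence $h^{-1}(-3/7)\in\Set{1,2}$. Moreover, for every positive integer $n$ the $T$-orbit of $n$ contains $1$ if and only if it contains $2$, since $T(1)=2$ and $T(2)=1$ force each to be reached once the other is. Consequently, whichever element of $\Set{1,2}$ equals $h^{-1}(-3/7)$, the assertion that $-3/7$ lies in the $T_q$-orbit of $h(n)$ is equivalent to the assertion that the $T$-orbit of $n$ reaches $\Set{1,2}$.

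Assembling these steps, for each fixed $h$ the claim that $-3/7$ lies in the $T_q$-orbit of $h(n)$ for all $n$ is equivalent to the $3x+1$ Conjecture, and since this equivalence does not depend on the choice of $h$, the full conjecture as stated, quantified over all conjugacies and all $n$, is again equivalent to the $3x+1$ Conjecture. The main obstacle is therefore not any gap in this reduction, which is elementary, but the fact that the surviving claim is exactly the open problem; the reformulation has structural rather than deductive value. The only plausible route to an unconditional proof that I can see would be to make some conjugacy explicit---for example by extracting a closed form from the parity-vector construction underlying Theorem~\ref{thm:main}---so that membership of $-3/7$ in the $T_q$-orbit of $h(n)$ becomes a concrete condition on the $q$--series expansion of $h(n)$ amenable to power-series or automata-theoretic analysis. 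I expect this last step, converting an abstract conjugacy into a usable closed form, to be where essentially all of the difficulty resides.
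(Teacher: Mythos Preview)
Your reduction is correct, but note that the statement you are attempting to prove is labeled a \emph{Conjecture} in the paper, and the paper does not offer a proof of it. What the paper supplies is exactly the kind of informal equivalence argument you have written out: any conjugacy must carry the unique $2$--cycle $\{1,2\}$ of $T$ onto the unique $2$--cycle $\{-3/7,-6/7\}$ of $T_q$, so the assertion that $-3/7$ lies in the $T_q$-orbit of $h(n)$ reduces to the assertion that the $T$-orbit of $n$ meets $\{1,2\}$, i.e., to the $3x+1$ Conjecture. Your write-up is in fact slightly more careful than the paper's surrounding discussion, since you treat the case of a non--parity-preserving conjugacy explicitly by observing that $h^{-1}(-3/7)$ may be either $1$ or $2$ and that this does not matter because $T(1)=2$, $T(2)=1$.

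So there is no gap in your reasoning, but there is also no proof here in the usual sense: you have re-derived the paper's intended point that Conjecture~\ref{cor:main2} is a reformulation of the $3x+1$ Conjecture, and you correctly identify the residual difficulty as the original open problem. Your closing remark about needing a closed form for the conjugacy matches the paper's own concluding suggestion.
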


It follows from the results of Monks and Yazinski \cite{MonksYaz} that there exists a parity-preserving conjugacy $h$ such that $h(1)=-3/7$.  For any such $h$ and any positive integer $n$, we can calculate the corresponding power series $h(n)$ for $T_q$.  If we can show that all such power series have $-3/7$ in their $T_q$ orbit, then it will prove the $3x+1$ conjecture.  We list the first few such values in Table \ref{tbl:conjugates1}.

Given that polynomials do not correspond to positive integers for any conjugacy between $T$ and $T_q$, a natural question to ask is whether such conjugacies might exist for some other maps $\Tab$. Our second main result answers this in the affirmative.

\begin{theorem}\label{thm:main3}
   The $T_{1,1+q^2}$-orbit of any polynomial contains the $2$-cycle $\{1,q\}$ or one of the two fixed points $0$ or $1+q$. Furthermore, $T_{1,1+q^2}(x)$ is a polynomial if and only if $x$ is a polynomial.\footnote{A similar theorem holds more generally for any $\Tab$, where $B$ is a polynomial multiple of $A+q^2$.}
\end{theorem}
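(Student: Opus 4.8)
The plan is to analyze the two claims of Theorem~\ref{thm:main3} separately, starting with the easier structural claim about polynomials being preserved. For the second sentence, observe that $T_{1,1+q^2}(x) = x/q$ when $q \mid x$, which is manifestly a polynomial iff $x$ is (the other direction being automatic since $q \mid x$ forces $x$ to be a polynomial multiple of $q$). In the odd case, $T_{1,1+q^2}(x) = (x + 1 + q^2)/q$; since $x$ is odd, $x + 1 + q^2$ has constant term $1 + 1 = 0$ in $F_2$, hence is divisible by $q$, and the quotient is a polynomial precisely when $x + 1 + q^2$ is, i.e. when $x$ is. So no $q$--series that is not a polynomial can be the image of a polynomial, and conversely every polynomial maps to a polynomial. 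This also shows the $T_{1,1+q^2}$-orbit of a polynomial stays inside $F_2[q]$ forever, so the dynamics really is the restriction of the power-series map to the polynomial ring.

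For the first sentence, I would mimic the degree-descent arguments of Hicks et al.\ \cite{Hicks} and Snapp--Tracy \cite{Snapp}, but adapted to $A = 1$, $B = 1+q^2$. The key quantity to control is $\deg x$ for $x \in F_2[q]$ a nonzero polynomial. On an odd step, $T_{1,1+q^2}(x) = (x + 1 + q^2)/q$: if $\deg x \ge 3$ then $\deg(x + 1 + q^2) = \deg x$ (leading term unaffected), so the degree drops by exactly $1$; if $\deg x \le 2$ then $\deg(x+1+q^2) \le 2$ and a short finite case analysis handles what happens. On an even step the degree also drops by $1$. Hence for any polynomial of degree $\ge 3$, every application of $T_{1,1+q^2}$ strictly decreases the degree, so after finitely many steps the orbit enters the finite set $\{x \in F_2[q] : \deg x \le 2\} \cup \{0\}$, which has only eight elements. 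The plan is then to compute the $T_{1,1+q^2}$-trajectory of each of these eight polynomials by hand and verify that each one lands in $\{0\}$, in the fixed point $1+q$, or in the cycle $\{1, q\}$ — and, along the way, to confirm that $\{1,q\}$ really is a $2$-cycle and $0, 1+q$ really are fixed points: $T_{1,1+q^2}(1) = (1 + 1 + q^2)/q = q$, $T_{1,1+q^2}(q) = 1$, $T_{1,1+q^2}(1+q) = (1+q+1+q^2)/q = (q + q^2)/q = 1+q$, and $T_{1,1+q^2}(0) = 0$.

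The main obstacle is making the degree-descent bookkeeping airtight in the low-degree regime, where the ``$+1+q^2$'' perturbation can raise rather than lower the degree (e.g.\ starting from a polynomial of degree $\le 1$, the term $q^2$ appears and degree can jump to $2$). This is exactly why the theorem's conclusion is a trichotomy rather than convergence to a single point: the low-degree polynomials split into three basins. So the real content is the finite check on the eight small polynomials, together with the clean observation that nothing of degree $\ge 3$ can ever re-enter once the degree has dropped below $3$, since degree is nonincreasing there. I would present the general degree lemma, then a single small table enumerating the orbits of $0, 1, q, 1+q, q^2, 1+q^2, q+q^2, 1+q+q^2$ to finish. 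The footnote's generalization to $B$ a polynomial multiple of $A + q^2$ follows the same template: in the odd case $Ax + B \equiv Ax + (A+q^2)\cdot(\text{poly})$, and one tracks $\deg$ using $\deg A = 0$; I would remark that the proof goes through \emph{mutatis mutandis} without writing it out.
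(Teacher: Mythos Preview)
Your proposal is correct and follows essentially the same approach as the paper: a degree-descent argument for the first claim and a direct two-case verification for the polynomial-preservation claim. The only difference is cosmetic---the paper takes the descent threshold at $\deg x > 1$ rather than $\deg x \ge 3$, so the orbit lands directly in $\{0,1,q,1+q\}$ and no separate table of eight low-degree cases is needed; your more conservative threshold works just as well but trades a slightly sharper lemma for a slightly longer finite check.
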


Since the only $2$-cycle for $T$ is $\{1,2\}$ and no positive integer has a $T$-orbit that contains the fixed points $0$ or $-1$, it follows that for any conjugacy between $T$ and $T_{1,1+q^2}$, the positive integers whose $T$-orbit contains $1$ correspond to polynomials.  Thus, to prove the $3x+1$ conjecture, it suffices to prove the following.

\begin{conjecture}\label{conj:2}
  For any conjugacy $H\colon\adics\to\Fq$ between $T$ and $T_{1,1+q^2}$ and any positive integer $n$, the power series $H(n)$ is a polynomial.
\end{conjecture}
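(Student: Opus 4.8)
The plan cannot be an unconditional proof: as the paragraph preceding it already indicates, this statement is equivalent to the $3x+1$ conjecture. So the realistic plan is to make that equivalence completely explicit using Theorem~\ref{thm:main3}, thereby isolating exactly where the difficulty lives, and then to describe the only genuinely workable line of attack (the computational/structural one the paper pursues).

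First I would convert the analytic hypothesis ``$H(n)$ is a polynomial'' into a purely dynamical statement. Because $H\circ T=\Tqq\circ H$, the bijection $H$ sends periodic points of $T$ to periodic points of $\Tqq$ of the same period; in particular it carries the fixed points $0,-1$ of $T$ bijectively onto the fixed points $0,1+q$ of $\Tqq$, and the unique $2$-cycle $\Set{1,2}$ of $T$ onto the unique $2$-cycle $\Set{1,q}$ of $\Tqq$. By the second assertion of Theorem~\ref{thm:main3}, $F_2[q]$ is invariant under $\Tqq$ in both directions (a forward image is a polynomial iff the point is), so the $\Tqq$-orbit of a point $x$ meets $F_2[q]$ precisely when $x\in F_2[q]$; combining this with the first assertion of the theorem, $x\in F_2[q]$ if and only if the $\Tqq$-orbit of $x$ reaches $\Set{1,q}$, $0$, or $1+q$. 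Transporting this characterization back through $H$ and using the correspondence of fixed points and $2$-cycles, I obtain: for every conjugacy $H$ and every $x\in\adics$, $H(x)$ is a polynomial if and only if the $T$-orbit of $x$ reaches $\Set{1,2}$, $0$, or $-1$.

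The remaining reduction step is to eliminate the two fixed points for positive inputs: $T$ maps positive integers to positive integers, so the $T$-orbit of any $n\in\bbN^{+}$ stays in $\bbZ^{+}$ and never contains $0$ or $-1$. Hence, for $n\in\bbN^{+}$, $H(n)$ is a polynomial if and only if the $T$-orbit of $n$ reaches $\Set{1,2}$, i.e.\ contains $1$. Quantifying over $n$, Conjecture~\ref{conj:2} is seen to be equivalent to the $3x+1$ conjecture, and—worth stressing—the equivalence holds verbatim for \emph{every} conjugacy $H$, so the phrase ``for any conjugacy'' costs nothing. At this point the main obstacle is laid bare: it is precisely the $3x+1$ conjecture, and there is no shortcut, which is why the statement appears here as a conjecture rather than a theorem. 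The only tractable avenue—the one the paper follows—is computational and structural: fix one explicit conjugacy $H$ (for instance a parity-preserving one, built for $\Tqq$ along the lines of the $T_q$ construction of \cite{MonksYaz}), compute the $q$-series $H(n)$ for small $n$, each of them a rational function of $q$ (equivalently, after $q\mapsto 2$, a rational number with odd denominator), and search for a closed form or a $\Tqq$-compatible invariant that visibly confines $H(\bbN^{+})$ to $F_2[q]$; exhibiting any such description would prove the conjecture, but producing one is exactly as hard as the original problem.
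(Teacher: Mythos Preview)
Your proposal is correct: the statement is a conjecture, not a theorem, and the paper does not prove it. What the paper does---in the paragraph immediately preceding Conjecture~\ref{conj:2} and the sentence after it---is exactly the equivalence argument you give: Theorem~\ref{thm:main3} characterizes $F_2[q]$ as the set of $x$ whose $\Tqq$-orbit reaches $\{1,q\}$, $0$, or $1+q$; transporting through $H$ and noting that positive integers never reach the fixed points $0,-1$ of $T$ yields the equivalence with the $3x+1$ conjecture. Your write-up is more detailed than the paper's one-sentence version, but the route is the same.
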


Equivalently, the $3x+1$ conjecture is true if and only if $\xi(H(n))$ is an integer for every positive integer $n$.  We provide the first few such values in Table \ref{tbl:conjugates2} for any parity-preserving conjugacy, $H$.  Given that Conjecture \ref{conj:2} is equivalent to the original $3x+1$ conjecture, it is not surprising that no integer sequence starting with the first twenty values of $\xi(H(n))$ appears in the OEIS \cite{sloane}.

Notice that arithmetic in $\Fq$ is somewhat simpler than it is in $\adics$, because addition and multiplication of their binary representations are binary arithmetic without carrying.  For example, in $\Fq$ we have $3+6=5$, $3\cdot 6=3\cdot 4+3\cdot 2=10$. In this notation, since $\xi(1+q)=3$, $\xi(1)=1$, and $\xi(q)=2$, the formula for $T_q$ becomes
  $$
    T_q(x)  =
      \begin{cases}
         \frac{x}{2}, & \text{if $x$ is even;} \\
         \frac{3x+1}{2}, & \text{otherwise}
      \end{cases}
  $$
which looks exactly like the formula for $T$ except that the addition and multiplication are the corresponding binary operations without carrying. Similarly, the formula for $T_{1,1+q^2}$ becomes
  $$
    T_{1,1+q^2}(x)  =
      \begin{cases}
         \frac{x}{2}, & \text{if $x$ is even;} \\
         \frac{x+5}{2}, & \text{otherwise}
      \end{cases}
  $$
with the same simpler addition and multiplication.

As the results of Hicks, Mullen, Yucas, and Zavislak \cite{Hicks}, Snapp and Tracy \cite{Snapp}, and Theorem \ref{thm:main3} show, working with these simpler binary operations can allow us to more easily determine the behavior of the orbits.  Thus, determining a closed formula for the correspondence shown in either Table \ref{tbl:conjugates1} or Table \ref{tbl:conjugates2} could very well lead to a solution to the original conjecture itself.

\subsection{Proof of Theorem \ref{thm:main}}\label{sec:proofs}

It is well known (cf.\ Lagarias \cite{Lagarias}) that $T$ is conjugate to the \emph{shift map} $\sigma\colon\adics\to\adics$, given by
  $$\sigma(x)=\begin{cases}
                 \frac{x}{2}, &  \text{$x$ is even;} \\
                 \frac{x-1}{2}, & \text{$x$ is odd.}
               \end{cases}
        $$
Notice that if $x_0x_1x_2\cdots$ is the binary representation of $x$, then $\sigma(x_0x_1x_2\cdots)=x_1x_2x_3\cdots$, i.e., $\sigma$ ``shifts'' the binary digits by deleting the first one.  This makes it very easy to determine, for example, the number of cycles with $n$ elements for a fixed $n$.  In particular, $\sigma$ has two fixed points $\Set{\overline{0}}$ and $\Set{\overline{1}}$, a unique cycle of size two $\Set{\overline{01}, \overline{10}}$, and two cycles of size three $\Set{\overline{110}, \overline{011},\overline{101}}$ and $\Set{\overline{001}, \overline{100},\overline{010}}$.

Since conjugacy is an isomorphism in the category of discrete dynamical systems, to prove Theorem \ref{thm:main}, it suffices to show that $\Tab$ is conjugate to $\sigma$.  We begin with a few easy lemmas.

\begin{lemma}\label{lemma:factor}
  Let $n\in\bbN$ and $U,V\in \Fq$ with $V$ odd.  Then 
    $$\Tab(U+q^{n+1}V)=\Tab(U)+q^nW$$ 
  for some odd $W\in\Fq$.
\end{lemma}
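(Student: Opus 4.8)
The plan is to reduce everything to a one-line computation once we make the right observation. The key point is that for any $Y\in\Fq$ the parity of $Y$ depends only on its constant term, and since $n+1\ge 1$ the series $q^{n+1}V$ has constant term $0$. Hence $U$ and $U+q^{n+1}V$ have the same parity, so $\Tab$ applies the same branch of its defining formula to both arguments. This splits the proof into two cases according to the parity of $U$.

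If $U$ is even, then $q$ divides both $U$ and $U+q^{n+1}V$ (recall that in $\Fq$ a series with constant term $0$ is uniquely $q$ times another series, so division by $q$ is well defined on even series), and
  $$\Tab(U+q^{n+1}V)=\frac{U+q^{n+1}V}{q}=\frac{U}{q}+q^nV=\Tab(U)+q^nV,$$
so $W=V$ works and is odd by hypothesis. If $U$ is odd, then $AU$ is odd, since the constant term of a product is the product of the constant terms, i.e.\ $1\cdot 1=1$ in $F_2$; hence $AU+B$ has constant term $1+1=0$ and is divisible by $q$, and likewise for $A(U+q^{n+1}V)+B$. Then
  $$\Tab(U+q^{n+1}V)=\frac{A(U+q^{n+1}V)+B}{q}=\frac{AU+B}{q}+q^nAV=\Tab(U)+q^n(AV),$$
so $W=AV$ works, and $AV$ is odd because both $A$ and $V$ are.

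The only facts used beyond bookkeeping are that the units of $\Fq$ are exactly the odd series (so division by $q$ on even series makes sense and products of odd series are odd) and that parity is unaffected by adding a multiple of $q$; both are immediate. I do not expect a genuine obstacle here — the lemma is essentially a warm-up, and its real purpose is presumably to support an induction showing that the first $n$ binary digits of $\Tab(x)$ depend only on the first $n+1$ digits of $x$, i.e.\ that $\Tab$ is continuous (in fact contracting) in the $q$-adic topology, which is what will make the conjugacy with the shift map $\sigma$ go through.
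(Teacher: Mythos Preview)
Your proof is correct and follows essentially the same approach as the paper: both split on the parity of $U$, observe that $U+q^{n+1}V$ has the same parity, and compute directly to obtain $W=V$ in the even case and $W=AV$ in the odd case. Your additional remarks justifying divisibility by $q$ in the odd branch are a small elaboration the paper leaves implicit, but the argument is otherwise identical.
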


\begin{proof}
  Let $n\in\bbN$ and $U,V\in \Fq$ with $V$ odd. Either $U$ is even or $U$ is odd.

  Case 1: Assume $U$ is even.  Then $U+Vq^{n+1}$ is even as well. So 
  \begin{align*}
    \Tab(U+q^{n+1}V) &= \frac{U+q^{n+1}V}{q} \\
                        &= \frac{U}{q}+q^nV \\
                        &= \Tab(U)+q^nV
  \end{align*}
  and $V$ is odd.
  
  Case 2: Assume $U$ is odd.  Then $U+Vq^{n+1}$ is odd as well. So 
  \begin{align*}
    \Tab(U+q^{n+1}V) &= \frac{A(U+q^{n+1}V)+B}{q} \\
                        &= \frac{AU+B}{q}+q^nAV \\
                        &= \Tab(U)+q^nAV,
  \end{align*}
  and $AV$ is odd since both $A$ and $V$ are.
  
  So in both cases, the result follows.
\end{proof}

\begin{corollary}\label{cor-tail}
  Let $n\in\bbN$ and $U,V\in \Fq$ with $V$ odd.  Then for all $k\leq n+1$ 
    $$\Tab^k(U+q^{n+1}V)=\Tab^k(U)+q^{n+1-k}W_k$$ 
  for some odd $W_k\in\Fq$.
\end{corollary}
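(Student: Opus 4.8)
The plan is to induct on $k$, with Lemma \ref{lemma:factor} as the engine for the inductive step. The base case $k=0$ is immediate: $\Tab^0(U+q^{n+1}V)=U+q^{n+1}V=\Tab^0(U)+q^{n+1}W_0$ with $W_0=V$, which is odd by hypothesis, and $q^{n+1-0}=q^{n+1}$ as required.

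For the inductive step I would assume the claim holds for some $k$ with $0\le k\le n$, so that $\Tab^k(U+q^{n+1}V)=\Tab^k(U)+q^{n+1-k}W_k$ for some odd $W_k\in\Fq$. Since $k\le n$, the exponent satisfies $n+1-k\ge 1$, so we may write $n+1-k=m+1$ with $m=n-k\in\bbN$. Applying $\Tab$ to both sides and invoking Lemma \ref{lemma:factor} with $\Tab^k(U)$ in the role of $U$, the odd series $W_k$ in the role of $V$, and $m$ in the role of $n$, we obtain $\Tab\bigl(\Tab^k(U)+q^{m+1}W_k\bigr)=\Tab\bigl(\Tab^k(U)\bigr)+q^{m}W$ for some odd $W\in\Fq$. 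Rewriting this as $\Tab^{k+1}(U+q^{n+1}V)=\Tab^{k+1}(U)+q^{n+1-(k+1)}W$ and setting $W_{k+1}=W$ closes the induction.

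The bound $k\le n+1$ in the statement is precisely what makes the argument go through: the inductive step is valid for $k\le n$ and yields the conclusion at $k+1\le n+1$, but once the exponent drops to $q^0$ there is no positive power of $q$ left to extract, so the induction correctly terminates at $k=n+1$. There is no genuine obstacle here; the only points requiring care are the exponent bookkeeping ($n+1-k=m+1$ so that the hypothesis of Lemma \ref{lemma:factor} is literally met) and checking that oddness is preserved at each stage, which holds because Lemma \ref{lemma:factor} produces an odd $W$ from an odd input (the odd series is either carried along unchanged or multiplied by the odd series $A$).
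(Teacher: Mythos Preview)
Your proof is correct and follows essentially the same approach as the paper: induction on $k$ with base case $k=0$ taking $W_0=V$, and the inductive step invoking Lemma~\ref{lemma:factor} applied to $\Tab^k(U)$ and $W_k$. Your version is slightly more explicit about the exponent bookkeeping (introducing $m=n-k$ to match the lemma's hypothesis verbatim), but the argument is the same.
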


\begin{proof}
  We proceed by induction on $k$ for arbitrary $n$. For the base case, we have
  \begin{align*}
      T^0_{A,B}(U+q^{n+1}V) &= U+q^{n+1}V \\
                            &= T^0_{A,B}(U)+q^{n+1-0}V
  \end{align*}
  where $V$ is odd.

  For the inductive step, let $k\leq n$ and assume 
  $$T^k_{A,B}(U+q^{n+1}V)=T^k_{A,B}(U)+q^{n+1-k}W_k$$
  for some odd $W_k$.  Then
  \begin{align*}
    T^{k+1}_{A,B}(U+q^{n+1}V) 
      &= \Tab(T^k_{A,B}(U+q^{n+1}V)) \\
      &= \Tab(T^k_{A,B}(U)+q^{n+1-k}W_k) \\
      &= T^{k+1}_{A,B}(U)+q^{n-k}W_{k+1} \\
      &= T^{k+1}_{A,B}(U)+q^{n+1-(k+1)}W_{k+1}
  \end{align*}
  for some odd $W_{k+1}$ by Lemma \ref{lemma:factor}, which completes the induction.
\end{proof}

This gives us the next interesting corollary, which essentially says that changing the coefficient of $q^n$ in a $q$-series changes the parity of its $n^\text{th}$ iterate. We write $\xcong{m}$ to indicate congruence with modulus $m$.

\begin{corollary}\label{cor-not-tail}
  Let $n\in\bbN$ and $U,V\in\Fq$ with $V$ odd.  Then $\Tab^n(U+q^nV)$ and $\Tab^n(U)$ have opposite parity, i.e., 
  $$\Tab^n(U+q^nV)\xncong{q}\Tab^n(U).$$
\end{corollary}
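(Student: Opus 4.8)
The plan is to deduce this almost immediately from Corollary~\ref{cor-tail} after a one-step index shift, handling $n=0$ separately as a base case. For $n=0$ the claim reduces to: $U+V$ and $U$ have opposite parity whenever $V$ is odd. This is just the observation that the parity of an element of $\Fq$ is its leading binary digit, and adding an odd element (leading digit $1$) toggles that digit; equivalently, $U+V$ is even exactly when $U$ is odd.

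For $n\geq 1$ I would invoke Corollary~\ref{cor-tail} with its "$n$" replaced by $n-1$ (legitimate since $n-1\in\bbN$): for every $k\leq n$ we get $\Tab^k(U+q^nV)=\Tab^k(U)+q^{n-k}W_k$ for some odd $W_k\in\Fq$. Specializing to $k=n$ yields $\Tab^n(U+q^nV)=\Tab^n(U)+W_n$ with $W_n$ odd, so the two $n$-th iterates differ by an odd $q$--series and therefore have opposite parity, i.e. $\Tab^n(U+q^nV)\xncong{q}\Tab^n(U)$.

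I do not expect a genuine obstacle here; the only points requiring a little care are the index bookkeeping (Corollary~\ref{cor-tail} is phrased in terms of $q^{n+1}V$ with exponents $n+1-k$, so one must shift before taking $k$ equal to the outer exponent $n$), and the fact that addition and subtraction coincide in characteristic $2$, which is what lets "differs by an odd element" force opposite parity rather than merely a difference in parity of the difference.

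Alternatively, one can prove the statement directly by induction on $n$: the base case is the $n=0$ observation above, and for the inductive step one writes $\Tab^{n+1}(U+q^{n+1}V)=\Tab^n\!\big(\Tab(U+q^{n+1}V)\big)=\Tab^n\!\big(\Tab(U)+q^nW\big)$ for some odd $W\in\Fq$ by Lemma~\ref{lemma:factor}, and then applies the inductive hypothesis with $\Tab(U)$ in place of $U$ and $W$ in place of $V$. Either route is short, and I would likely present the second since it does not rely on the (slightly more delicate) Corollary~\ref{cor-tail}.
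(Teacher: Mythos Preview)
Your proposal is correct, and your first route is exactly the paper's own proof: the paper handles $n=0$ directly (since adding the odd $V$ flips parity) and for $n>0$ applies Corollary~\ref{cor-tail} with the index shift you describe to obtain $\Tab^n(U+q^nV)=\Tab^n(U)+W$ with $W$ odd. Your second route (direct induction via Lemma~\ref{lemma:factor}) is a valid alternative that bypasses Corollary~\ref{cor-tail}, but the paper opts for the first.
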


\begin{proof}
  We consider two cases, either $n=0$ or $n>0$.
  
  If $n=0$ then since $V$ is odd, $U+V \xncong{q} U$, and so
    \begin{align*}
       \Tab^n(U+q^nV) &= \Tab^0(U+q^0V) \\
                   &= U+V \\
                   &\xncong{q} U \\
                   &= \Tab^0(U). 
    \end{align*}
  
  If $n>0$, then by the previous corollary, there exists an odd $W\in\Fq$ such that $T^n_{A,B}(U+q^nV)=\Tab^n(U)+q^{n-n}W$, so
    \begin{align*}
      \Tab^n(U+q^nV) &= \Tab^n(U)+q^{n-n}W \\
                        &= \Tab^n(U)+W \\
                        &\xncong{q}\Tab^n(U).
    \end{align*}
\end{proof}

\begin{definition}
   For $x\in\Fq$, define \emph{the parity vector} $\PV_{A,B}(x)$ to be the $2$-adic integer whose binary representation is $x_0x_1x_2\cdots$ where for each $k\in\bbN$, 
   $x_k\in\{0,1\}$ is the parity of $T^k_{A,B}(x)$.
\end{definition}

We abbreviate $\PV_{A,B}$ as $\PV$ when the subscripts are clear from the context.

\begin{theorem}\label{thm-solenoidal}
    For $n\in\bbN$ and $a,b\in\Fq$,
    $$\PV(a)\xcong{2^{n+1}}\PV(b) \xiff a\xcong{q^{n+1}}b.$$
\end{theorem}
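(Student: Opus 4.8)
The plan is to prove the biconditional $\PV(a)\xcong{2^{n+1}}\PV(b)\xiff a\xcong{q^{n+1}}b$ by induction on $n$, since the $n=0$ case says exactly that $a$ and $b$ have the same parity iff $a\equiv b\pmod q$, which is immediate from the definition of parity (the leading binary digit) and of $\PV$. For the inductive step, I would first observe that $a\xcong{q^{n+1}}b$ can be decomposed as the conjunction of $a\xcong{q^n}b$ (which, by the inductive hypothesis, is equivalent to $\PV(a)\xcong{2^n}\PV(b)$) together with a statement about the coefficient of $q^n$ in $a$ versus $b$. Dually, $\PV(a)\xcong{2^{n+1}}\PV(b)$ decomposes as $\PV(a)\xcong{2^n}\PV(b)$ together with agreement of the $n$th binary digit of the parity vectors, i.e., $T^n_{A,B}(a)\xcong{q}T^n_{A,B}(b)$. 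So the heart of the argument is: assuming $a\xcong{q^n}b$, show that the $q^n$-coefficients of $a$ and $b$ agree if and only if $T^n_{A,B}(a)$ and $T^n_{A,B}(b)$ have the same parity.

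For that core equivalence, I would write $a = b + q^n V$ where the hypothesis $a\xcong{q^n}b$ forces the "error term" to be divisible by $q^n$; more precisely, write the difference's lowest-order part explicitly. If the $q^n$-coefficients of $a$ and $b$ are equal, then $a\xcong{q^{n+1}}b$, and applying Corollary~\ref{cor-tail} with the roles set up appropriately (or simply noting $a$ and $b$ differ by a multiple of $q^{n+1}$ and invoking Lemma~\ref{lemma:factor} iterated, i.e., Corollary~\ref{cor-tail}) gives $T^n_{A,B}(a) = T^n_{A,B}(b) + q^{n+1-n}W$ hence they are congruent mod $q$ and share parity. Conversely, if the $q^n$-coefficients differ, then $a = b' + q^n V$ with $V$ odd and $b'\xcong{q^{n+1}}b$; by the equal-coefficient case $T^n_{A,B}(b')\xcong{q}T^n_{A,B}(b)$, while Corollary~\ref{cor-not-tail} gives $T^n_{A,B}(b'+q^nV)\xncong{q}T^n_{A,B}(b')$, so $T^n_{A,B}(a)\xncong{q}T^n_{A,B}(b)$, i.e., opposite parity. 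Chaining these with the inductive hypothesis yields both directions of the $(n+1)$ statement.

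The main obstacle I anticipate is purely bookkeeping: cleanly expressing "the $q^n$-coefficients agree" in a way that meshes with the two preceding corollaries, since Corollary~\ref{cor-tail} is stated for a single series $U + q^{n+1}V$ rather than for a difference of two series, and Corollary~\ref{cor-not-tail} is stated for $U + q^nV$. The fix is to always reduce to those normal forms: given $a\xcong{q^n}b$, set $U = b$ and write $a = U + q^n V$ for a uniquely determined $V\in\Fq$; then "$q^n$-coefficients agree" is exactly "$V$ is even," i.e., $V = qV'$, putting us in the hypothesis of Corollary~\ref{cor-tail} (with $V'$ possibly zero — one should either allow $W$ to be zero there or split off that trivial subcase), while "$q^n$-coefficients differ" is exactly "$V$ is odd," the hypothesis of Corollary~\ref{cor-not-tail}. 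Once this normalization is fixed, the rest is a short symmetric argument. I would also remark that the theorem immediately implies $\PV_{A,B}$ is a bijection $\Fq\to\adics$ (injectivity from the $\xiff$ read across all $n$, surjectivity via a standard digit-by-digit / completeness argument), which is presumably how the conjugacy in Theorem~\ref{thm:main} is then constructed.
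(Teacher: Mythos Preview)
Your proposal is correct and follows essentially the same inductive route as the paper, reducing the step to the core equivalence via Corollaries~\ref{cor-tail} and~\ref{cor-not-tail} (the paper phrases the $(\Rightarrow)$ half as a contradiction through Corollary~\ref{cor-tail} rather than invoking Corollary~\ref{cor-not-tail} directly, but this is cosmetic). One small point: your caveat about $V'$ should cover not only $V'=0$ but any even $V'$---just factor out the full power of $q$ from $V$ before applying Corollary~\ref{cor-tail}; the paper's own write-up glosses over the same edge case.
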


\begin{proof}
    We proceed by induction on $n$.
    
    For the base case, if $n=0$, then for any $a,b\in\Fq$, $T^0_{A,B}(a)=a$ and $T^0_{A,B}(b)=b$. So $\PV(a)\xcong{2^{0+1}}\PV(b)$ if and only if $a\xcong{q^{0+1}}b$.

    For the inductive hypothesis, let $k\in\bbN$ and assume for all $a,b\in\Fq$,
       $$\PV(a)\xcong{2^{k+1}}\PV(b)\xiff a\xcong{q^{k+1}}b.$$
    
    Let $a,b\in\Fq$.

    $(\xlimplies)$ Assume $a\xcong{q^{k+2}}b$.  Then $a=p+q^{k+1}c$ and $b=p+q^{k+1}d$ for some $p,c,d\in\Fq$ such that $p$ is a polynomial of degree less than $k+1$ and $c\xcong{q}d$ (i.e., they have the same parity). Clearly, 
    \begin{align*}
      a &=p+q^{k+1}c \\
        &\xcong{q^{k+1}}p \\
        &\xcong{q^{k+1}}p+q^{k+1}d \\
        &=b,   
    \end{align*} 
    so by the inductive hypothesis $\PV(a)\xcong{2^{k+1}}\PV(b)$.  
    
    Additionally, by Corollary \ref{cor-tail}, 
    \begin{align*}
      T^{k+1}_{A,B}(a) &= T^{k+1}_{A,B}(p+q^{k+1}c) \\
                       &= \Tab^{k+1}(p)+q^{0}c \\
                       &= \Tab^{k+1}(p)+c \\
                       &\xcong{q}\Tab^{k+1}(p)+d \\
                       &= \Tab^{k+1}(p)+q^{0}d \\
                       &= T^{k+1}_{A,B}(p+q^{k+1}d) \\
                       &= T^{k+1}_{A,B}(b).
    \end{align*}
    Combining these two results shows $\PV(a)\xcong{2^{k+2}}\PV(b)$ as desired.

    $(\ximplies)$ Now assume $\PV(a)\xcong{2^{k+2}}\PV(b)$. Then $\PV(a)\xcong{2^{k+1}}\PV(b)$, so by the induction hypothesis $a\xcong{q^{k+1}}b$. 
    
    Assume $a\xncong{q^{k+2}}b$. Since $a\xcong{q^{k+1}}b$, it must be that $a=p+q^{k+1}c$ and $b=p+q^{k+1}d$ for some $p,c,d\in\Fq$ such that $p$ is a polynomial of degree less than $k+1$ and $c\xncong{q}d$ (i.e., $c,d$ have opposite parity). Thus, again by Corollary \ref{cor-tail}, 
    \begin{align*}
      T^{k+1}_{A,B}(a) &= T^{k+1}_{A,B}(p+q^{k+1}c) \\
                       &= \Tab^{k+1}(p)+q^{0}c \\
                       &= \Tab^{k+1}(p)+c \\
                       &\xncong{q}\Tab^{k+1}(p)+d \\
                       &= \Tab^{k+1}(p)+q^{0}d \\
                       &= T^{k+1}_{A,B}(p+q^{k+1}d) \\
                       &= T^{k+1}_{A,B}(b)
    \end{align*}
    so $\PV(a)\xncong{2^{k+2}}\PV(b)$, which is a contradiction.  So $a\xcong{q^{k+2}}b$ as desired.
\end{proof}

It follows that $PV$ induces a bijection from $\Fq/q^n$ to $\adics/2^n$ for any positive $n$.

\begin{corollary}\label{cor-permutation}
    Let $n\in\bbN^+$.  For any $a\in\Fq$, define $\wPV([a])=[\PV(a)]$ where $[a]$ is the equivalence class of $a$ in $\Fq/q^n$ and $[\PV(a)]$ is the equivalence class of $\PV(a)$ in $\adics/2^n$.  Then $\wPV\colon \Fq/q^n\to\adics/2^n$ is a bijection.
\end{corollary}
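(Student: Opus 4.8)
The plan is to obtain this as an immediate consequence of Theorem~\ref{thm-solenoidal}, after a harmless shift of index. Since $n\in\bbN^+$, write $n=m+1$ with $m\in\bbN$; then Theorem~\ref{thm-solenoidal} reads
$$\PV(a)\xcong{2^{n}}\PV(b)\xiff a\xcong{q^{n}}b\qquad\text{for all }a,b\in\Fq.$$
The implication $a\xcong{q^n}b\ximplies\PV(a)\xcong{2^n}\PV(b)$ shows that $\wPV$ is well defined: if $[a]=[b]$ in $\Fq/q^n$, then $a\xcong{q^n}b$, so $\PV(a)\xcong{2^n}\PV(b)$, i.e.\ $[\PV(a)]=[\PV(b)]$ in $\adics/2^n$, and hence the rule $[a]\mapsto[\PV(a)]$ is independent of the chosen representative. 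The reverse implication $\PV(a)\xcong{2^n}\PV(b)\ximplies a\xcong{q^n}b$ gives injectivity at once: if $\wPV([a])=\wPV([b])$ then $\PV(a)\xcong{2^n}\PV(b)$, so $a\xcong{q^n}b$, i.e.\ $[a]=[b]$.

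For surjectivity I would simply count. A residue class in $\Fq/q^n$ is uniquely determined by the coefficients of $1,q,\dots,q^{n-1}$ in any of its representatives, so $|\Fq/q^n|=2^n$; likewise $|\adics/2^n|=2^n$. An injection between finite sets of the same cardinality is a bijection, which completes the proof.

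I do not anticipate any genuine obstacle here; the only points needing care are the index shift between the ``$n+1$'' appearing in Theorem~\ref{thm-solenoidal} and the ``$n$'' in the present statement, and the observation that both quotients are finite of equal size, so that injectivity automatically upgrades to bijectivity. (If one prefers to avoid counting, surjectivity can instead be proved directly: given a $2$-adic residue class, build a preimage one binary digit at a time, using Corollary~\ref{cor-not-tail} to control the parity of each successive iterate — but the cardinality argument is shorter.)
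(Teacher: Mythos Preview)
Your proof is correct and follows essentially the same approach as the paper: well-definedness from one direction of Theorem~\ref{thm-solenoidal}, injectivity from the other, and surjectivity from the cardinality count $|\Fq/q^n|=|\adics/2^n|=2^n$. Your explicit remark about the index shift between the ``$n+1$'' in Theorem~\ref{thm-solenoidal} and the ``$n$'' here is a nice bit of care that the paper leaves implicit.
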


\begin{proof}
    By Theorem \ref{thm-solenoidal}, $\wPV$ maps to the same equivalence class for all elements in the same equivalence class, so $\wPV$ is a well-defined function. 
    
    If $[a],[b]\in\Fq/q^n$ and $\wPV([a])=\wPV([b])$, then 
    \begin{align*}
        [\PV(a)] &= \wPV([a]) \\
                 &= \wPV([b]) \\
                 &= [\PV(b)].
    \end{align*}
    Thus, $\PV(a)\xcong{2^n}\PV(b)$. Therefore, $a\xcong{q^n}b$ by Theorem \ref{thm-solenoidal} and thus $[a]=[b]$.  So $\wPV$ is injective.

    Finally, both $\Fq/q^n$ and $\adics/2^n$ are finite sets with $2^n$ elements, so any injective function between them must be bijective as well.
\end{proof}

Taking the inverse limit of these maps with respect to $n$ produces the desired conjugacy needed to prove our main result.

\begin{proof}[Proof (of Theorem \ref{thm:main})]
    We will show that $\PV\colon\Fq\to\adics$ is a conjugacy between $\Tab$ and the shift map $\sigma$.

    Let $a,b\in\Fq$ and assume $\PV(a)=\PV(b)$.  Then for all positive $n$, $\PV(a)\xcong{2^n}\PV(b)$ and thus $a\xcong{q^n}b$ by Theorem \ref{thm-solenoidal}. But two elements of $\Fq$ which are congruent mod $q^n$ for all $n$ must be equal.  Therefore, $a=b$ and so $\PV$ is injective.

    To see that $\PV$ is surjective, let $x\in\adics$. Then by Corollary \ref{cor-permutation}, for each positive integer $n$ there exists $a_n\in\Fq$ such that $\PV(a_n)\xcong{2^n}x$.  Since $s\xcong{2^n}x$ implies $s\xcong{2^m}x$ for all $m\leq n$, by Theorem \ref{thm-solenoidal}, $a_n\xcong{q^n}a_m$ for all $m\leq n$. So there is a unique $a\in\Fq$ such that $a\xcong{q^n}a_n$ and $\PV(a)\xcong{2^n}x$ for all $n$. Thus, $\PV(a)=x$ as desired.

    Finally, to see that $\PV$ is a conjugacy, let $a\in\Fq$.  Then by the definition of parity vector,
    \begin{align*}
        (\PV\circ \Tab)(a) &= \PV(\Tab(a)) \\
                           &= \sigma(\PV(a))  \\
                           &= (\sigma\circ\PV)(a).
    \end{align*}
    
    So $\Tab$ is conjugate to the shift map via conjugacy $\PV$.  But it is well known (cf.\ Lagarias \cite{Lagarias}) that the shift map is conjugate to $T$ with conjugacy $\Phi\colon \adics\to\adics$, and so $\Tab$ is conjugate to $T$ with conjugacy $h=\Phi\circ\PV$.
\end{proof}

This proves Theorem \ref{thm:main}. Corollary \ref{cor:main1} follows immediately as a special case since $T_q=T_{1+q,1}$.

\subsection{Proof of Theorem \ref{thm:main3}}

To determine which $\Tab$ are conjugate to $T$ via a conjugacy that maps polynomials to positive integers, we can first ask which $\Tab$ have a polynomial $2$-cycle, since such a $2$-cycle will be unique and correspond to the unique $2$-cycle $\{1,2\}$ for $T$.

\begin{lemma}\label{lem:poly-cycles}
  The unique $2$-cycle for $\Tab$ is $\left\{\frac{B}{A+q^2},\frac{qB}{A+q^2}\right\}$, and its fixed points are $0$ and $\frac{B}{A+q}$. 
\end{lemma}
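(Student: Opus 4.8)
The plan is to work throughout in the integral domain $\Fq$, using the basic fact that an element of $\Fq$ is a unit exactly when its constant term is nonzero, i.e.\ exactly when it is odd; in particular $A+q$ and $A+q^2$ are units because $A$ is odd, and $B\neq 0$ because $B$ is odd. I would first dispose of the fixed points and then of the $2$-cycle, in each case splitting on the parity of the relevant iterate so that the correct branch of $\Tab$ applies.

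For the fixed points, I solve $\Tab(x)=x$. If $q\mid x$ the equation is $x/q=x$, i.e.\ $(1+q)x=0$, which forces $x=0$; and $0$ is even, so this is consistent. If $q\nmid x$ the equation is $(Ax+B)/q=x$, i.e.\ $(A+q)x=B$ (using $-B=B$ in characteristic $2$); since $A+q$ is a unit this gives $x=B/(A+q)$, whose constant term is $1$, so it is odd and the odd branch was indeed the right one. Hence the fixed points are exactly $0$ and $B/(A+q)$.

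For the $2$-cycle, suppose $\{a,b\}$ is a $2$-cycle with $a\neq b$, so $\Tab(a)=b$ and $\Tab(b)=a$. I first rule out $a,b$ having the same parity. If both are even then $b=a/q$ and $a=b/q$ give $a=a/q^2$, i.e.\ $(1+q^2)a=0$, so $a=0=b$, a contradiction. If both are odd then $qb=Aa+B$ and $qa=Ab+B$; adding these and cancelling $B$ yields $(A+q)(a+b)=0$, hence $a=b$, again a contradiction. So exactly one of $a,b$ is odd; relabel so that $a$ is odd and $b$ is even (this is legitimate since $\{a,b\}$ is a $2$-cycle, so each still maps to the other). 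Then $a=\Tab(b)=b/q$, so $b=qa$, and $b=\Tab(a)=(Aa+B)/q$, so $q^2a=Aa+B$, i.e.\ $(A+q^2)a=B$. Since $A+q^2$ is a unit, $a=B/(A+q^2)$ and $b=qa=qB/(A+q^2)$. One then checks $a\neq b$ (equality would force $(1+q)B=0$, impossible as $B\neq0$), that $a$ is odd and $b$ is even (consistent with the relabelling), and that this pair genuinely cycles: $\Tab(b)=b/q=a$, and since $Aa+B=Aa+(A+q^2)a=q^2a$ we get $\Tab(a)=q^2a/q=qa=b$.

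The argument involves no deep obstacle; the only point requiring care is the bookkeeping that, at each step, the element produced really has the parity under which the corresponding branch of $\Tab$ was used — which is exactly what the unit/constant-term observation guarantees. I would also note in passing that the same computation immediately yields the footnote's generalization of Theorem \ref{thm:main3}: when $B$ is a polynomial multiple of $A+q^2$, the elements $B/(A+q^2)$ and $qB/(A+q^2)$ of the $2$-cycle are polynomials.
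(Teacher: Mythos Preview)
Your proof is correct and follows the same parity-casework strategy as the paper: split on whether the relevant iterates are even or odd, solve the resulting linear equations in $\Fq$, and use that $A+q$ and $A+q^2$ are units. The one structural difference is that the paper first invokes the conjugacy with the shift map (Theorem~\ref{thm:main}) to know \emph{a priori} that $\Tab$ has exactly two fixed points and a unique $2$-cycle, and then simply locates them by solving $\Tab^2(x)=x$; you instead derive uniqueness directly from the case analysis and explicitly verify that the candidate pair really forms a $2$-cycle with the claimed parities, so your argument is self-contained and does not depend on the earlier conjugacy result.
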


\begin{proof}
  Since $\Tab$ is conjugate to the shift map, it must also have a unique $2$-cycle and two fixed points.  These elements must all satisfy $\Tab(\Tab(x))=x$.  If both $x$ and $\Tab(x)$ are even, then $x=\Tab(\Tab(x))=(x/q)/q$, so $x=0$. If both $x$ and $\Tab(x)$ are odd, then $x=\Tab(\Tab(x))=(A((Ax+B)/q)+B)/q$, so $x=\frac{B}{A+q}$.  Finally, if $x$ is even and $\Tab(x)$ is odd, then $x=\Tab(\Tab(x))=(A((x/q)+B)/q$, so $x=\frac{qB}{A+q^2}$ and $\Tab(x)=\frac{B}{A+q^2}$ (reversing the parities just swaps $x$ and $\Tab(x)$).
\end{proof}

Thus, the $2$-cycle of $\Tab$ will consist of polynomials whenever $\frac{B}{A+q^2}$ is a polynomial, i.e., whenever $B$ is a polynomial multiple of $A+q^2$.  The simplest example of this family of dynamical systems is when $A=1$ and $B=1+q^2$. In that case, the $2$-cycle of $\Tqq$ is $\{1,q\}$ and its fixed points are $0$ and $1+q$.  Thus, to prove Theorem \ref{thm:main3} it suffices to show that every polynomial eventually enters either the $2$-cycle or one of the two fixed points, and that $\Tqq$ maps every polynomial, and only polynomials, to other polynomials.

\begin{proof}[Proof (of Theorem \ref{thm:main3})]
Let $x\in\Fq$ be a polynomial of degree $n>1$. If $x$ is even then $\Tqq(x)=x/q$ is a polynomial of degree $n-1$.  If $x$ is odd then $x=P+q^n$ for some odd polynomial $P$ of degree less than $n$ (i.e., $P=x+q^n$). So 
\begin{align*}
    \Tqq(x) &=\Tqq(P+q^n) \\
            &=\frac{(P+q^n)+1+q^2}{q} \\
            &=\frac{1+P}{q}+q+q^{n-1}.
\end{align*}

But $(1+P)/q+q+q^{n-1}$ is either $0$ or is a polynomial of degree less than $n$ because $P$ is odd.  

In both cases, the degree of $\Tqq(x)$ is strictly less than the degree of $x$.  So the $\Tqq$-orbit of every polynomial of degree greater than $1$ is a sequence of polynomials of decreasing degrees and thus must eventually reach $0$, $1$, $q$, or $1+q$, which are the two fixed points and the two elements of the $2$-cycle as claimed.

For the second part of the theorem, let $x\in\Fq$. If $x$ is an even polynomial, then $x=qP$ for some polynomial $P$, so $\Tab(x)=x/q=qP/q=P$ is a polynomial. If $x$ is odd then $x=1+qP$ for some polynomial $P$, so $\Tqq(x)=\Tqq(1+qP)=(1+qP+1+q^2)/q=P+q$ which is also a polynomial.  

Conversely, let $x\in\Fq$ such that $\Tqq(x)$ is a polynomial. If $x$ is even then $\Tqq(x)=x/q$, so $x=q\Tqq(x)$ is a polynomial.  Similarly, if $x$ is odd then $\Tqq(x)=(x+1+q^2)/q$ so $x=q\Tqq(x)+1+q^2$ which, once again, is a polynomial.

This completes the proof.
\end{proof}

In conclusion, recent authors have shown that the map $T_q$ on $F_2[q]$ has the property that the $T_q$-orbit of every polynomial contains $1$. This does not prove the original $3x+1$ conjecture, however, because the correspondence $\xi$ between nonzero polynomials and positive integers is not a conjugacy of discrete dynamical systems.  

By extending the domains of $T_q$ and $T$ to $\Fq$ and $\adics$, respectively, we find that $T_q$ and $T$ are actually conjugate as discrete dynamical systems, but not via a conjugacy that maps polynomials to positive integers. This result extends to an infinite family of maps on $\Fq$ which are all conjugate to $T$, and some of these, such as $\Tqq$, do have the property that the positive integers whose $T$-orbit contains $1$ correspond to polynomials.

By applying the correspondence $\xi$ to the polynomials that correspond to the positive integers whose $T$-orbit contains $1$, we obtain the sequence $\xi(H(n))$ that starts with 
$$1,2,47,4,21,94,2763,8,11049,42,1383,188,173,5526,5427,16,689,22098,22143,84,\ldots$$
Showing that every term in this sequence is an integer, or equivalently, proving that every positive integer corresponds to a polynomial via a conjugacy with $\Tqq$, would resolve the original $3x+1$ conjecture.

\begin{table}
  \centering
  \begin{tabular}{lrl}
    \toprule
     $n$ & $\xi(h(n))$ & $h(n)$ \\
    \midrule 
    \addlinespace[0.5em]
    $ 1$ & $-3/7$ & $(1+q)\cdot\frac{1}{(1+q^{3})}$ \\ 
    \addlinespace[0.5em]
    $ 2$ & $-6/7$ & $(q+q^{2})\cdot\frac{1}{(1+q^{3})}$ \\
    \addlinespace[0.5em]
    $ 3$ & $-13/7$ & $1+(1+q^{2})\cdot\frac{q^{2}}{(1+q^{3})}$ \\
    \addlinespace[0.5em]
    $ 4$ & $-12/7$ & $(q+q^{2})\cdot\frac{q}{(1+q^{3})}$ \\
    \addlinespace[0.5em]
    $ 5$ & $9/7$ & $1+q+(1+q)\cdot\frac{q^{2}}{(1+q^{3})}$ \\
    \addlinespace[0.5em]
    $ 6$ & $-26/7$ & $q+(1+q^{2})\cdot\frac{q^{3}}{(1+q^{3})}$ \\
    \addlinespace[0.5em]
    $ 7$ & $-175/9$ & $1+q^{3}+(q^{3}+q^{4}+q^{5})\cdot\frac{q^{5}}{(1+q^{6})}$ \\
    \addlinespace[0.5em]
    $ 8$ & $-24/7$ & $(q+q^{2})\cdot\frac{q^{2}}{(1+q^{3})}$ \\
    \addlinespace[0.5em]
    $ 9$ & $1187/65$ & $1+q+q^{5}+(1+q+q^{2}+q^{3}+q^{5}+q^{10})\cdot\frac{q^{6}}{(1+q^{12})}$ \\
    \addlinespace[0.5em]
    $10$ & $18/7$ & $q+q^{2}+(1+q)\cdot\frac{q^{3}}{(1+q^{3})}$ \\
    \addlinespace[0.5em]
    $11$ & $-37/7$ & $1+q^{2}+q^{3}+q^{2}\cdot\frac{q^{5}}{(1+q^{3})}$ \\
    \addlinespace[0.5em]
    $12$ & $-52/7$ & $q^{2}+(1+q^{2})\cdot\frac{q^{4}}{(1+q^{3})}$ \\
    \addlinespace[0.5em]
    $13$ & $-31/7$ & $1+q+q^{2}+(1+q^{2})\cdot\frac{q^{4}}{(1+q^{3})}$ \\
    \addlinespace[0.5em]
    $14$ & $-350/9$ & $q+q^{4}+(q^{3}+q^{4}+q^{5})\cdot\frac{q^{6}}{(1+q^{6})}$ \\
    \addlinespace[0.5em]
    $15$ & $-359/9$ & $1+q^{4}+(q^{3}+q^{4}+q^{5})\cdot\frac{q^{6}}{(1+q^{6})}$ \\
    \addlinespace[0.5em]
    $16$ & $-48/7$ & $(q+q^{2})\cdot\frac{q^{3}}{(1+q^{3})}$ \\
    \addlinespace[0.5em]
    $17$ & $-115/7$ & $1+q+q^{3}+(q+q^{2})\cdot\frac{q^{5}}{(1+q^{3})}$ \\
    \addlinespace[0.5em]
    $18$ & $2374/65$ & $q+q^{2}+q^{6}+(1+q+q^{2}+q^{3}+q^{5}+q^{10})\cdot\frac{q^{7}}{(1+q^{12})}$ \\
    \addlinespace[0.5em]
    $19$ & $2309/65$ & $1+q^{2}+q^{6}+(1+q+q^{2}+q^{3}+q^{5}+q^{10})\cdot\frac{q^{7}}{(1+q^{12})}$ \\
    \addlinespace[0.5em]
    $20$ & $36/7$ & $q^{2}+q^{3}+(1+q)\cdot\frac{q^{4}}{(1+q^{3})}$ \\
    \addlinespace[0.5em]
   \bottomrule
  \end{tabular}
  \caption{The power series in $\Fq$ that correspond to the first twenty positive integers $n$ via any parity-preserving conjugacy $h\colon\adics\to\Fq$ between $T$ and $T_q$.}
  \label{tbl:conjugates1}
\end{table}

\begin{table}
  \centering
  \begin{tabular}{lrl}
    \toprule
     $n$ & $\xi(H(n))$ & $H(n)$ \\ 
    \midrule 
    $ 1$ & $    1$ & $1$ \\ \addlinespace[0.5em]
    $ 2$ & $    2$ & $q$ \\ \addlinespace[0.5em]
    $ 3$ & $   47$ & $q^5+q^3+q^2+q+1$ \\ \addlinespace[0.5em]
    $ 4$ & $    4$ & $q^2$ \\ \addlinespace[0.5em]
    $ 5$ & $   21$ & $q^4+q^2+1$ \\ \addlinespace[0.5em]
    $ 6$ & $   94$ & $q^6+q^4+q^3+q^2+q$ \\ \addlinespace[0.5em]
    $ 7$ & $ 2763$ & $q^{11}+q^9+q^7+q^6+q^3+q+1$ \\ \addlinespace[0.5em]
    $ 8$ & $    8$ & $q^3$ \\ \addlinespace[0.5em]
    $ 9$ & $11049$ & $q^{13}+q^{11}+q^9+q^8+q^5+q^3+1$ \\ \addlinespace[0.5em]
    $10$ & $   42$ & $q^5+q^3+q$ \\ \addlinespace[0.5em]
    $11$ & $ 1383$ & $q^{10}+q^8+q^6+q^5+q^2+q+1$ \\ \addlinespace[0.5em]
    $12$ & $  188$ & $q^7+q^5+q^4+q^3+q^2$ \\ \addlinespace[0.5em]
    $13$ & $  173$ & $q^7+q^5+q^3+q^2+1$ \\ \addlinespace[0.5em]
    $14$ & $ 5526$ & $q^{12}+q^{10}+q^8+q^7+q^4+q^2+q$ \\ \addlinespace[0.5em]
    $15$ & $ 5427$ & $q^{12}+q^{10}+q^8+q^5+q^4+q+1$ \\ \addlinespace[0.5em]
    $16$ & $   16$ & $q^4$ \\ \addlinespace[0.5em]
    $17$ & $  689$ & $q^9+q^7+q^5+q^4+1$ \\ \addlinespace[0.5em]
    $18$ & $22098$ & $q^{14}+q^{12}+q^{10}+q^9+q^6+q^4+q$ \\ \addlinespace[0.5em]
    $19$ & $22143$ & $q^{14}+q^{12}+q^{10}+q^9+q^6+q^5+q^4+q^3+q^2+q+1$ \\ \addlinespace[0.5em]
    $20$ & $   84$ & $q^6+q^4+q^2$ \\ \addlinespace[0.5em]
    \bottomrule \addlinespace[0.5em]
  \end{tabular}
  \caption{The polynomials in $\Fq$ that correspond to the first twenty positive integers $n$ via any parity-preserving conjugacy $H\colon\adics\to\Fq$ between $T$ and $T_{1,1+q^2}$.}
  \label{tbl:conjugates2}
\end{table}

\clearpage

\bigskip
\hrule
\bigskip

\noindent 2020 {\it Mathematics Subject Classification}:
Primary 11B83; Secondary 11B37, 37B10.

\noindent \emph{Keywords: } Collatz conjecture, $q$-series.

\bigskip
\hrule
\bigskip

\end{document}